\newtheorem{thm}{Theorem}[section]
 \newtheorem{lem}[thm]{Lemma}
 \theoremstyle{definition}
 \theoremstyle{remark}
 \numberwithin{equation}{section}
\def\be#1 {\begin{equation} \label{#1}}
\def\ee{\end{equation}}
\def\sqw{\hbox{\rlap{\leavevmode\raise.3ex\hbox{$\sqcap$}}$%
\sqcup$}}
\def\findem{\ifmmode\sqw\else{\ifhmode\unskip\fi\nobreak\hfil
\penalty50\hskip1em\null\nobreak\hfil\sqw
\parfillskip=0pt\finalhyphendemerits=0\endgraf}\fi}
\newcommand{\R}{\mathbb R}
\newcommand{\N}{\mathbb N}
\newcommand{\Z}{\mathbb Z}
\newcommand{\la}{\lambda}
\newcommand{\de}{\delta}
\title{Bounds for spectral projectors on the Euclidean cylinder}
\begin{document}

\begin{abstract}
	We prove essentially optimal bounds for norms of spectral projectors on thin spherical shells for the Laplacian on the cylinder \((\mathbb{R} / \mathbb{Z}) \times \mathbb{R}\). In contrast to previous investigations into spectral projectors on tori, having one unbounded dimension available permits a compact self-contained proof.
\end{abstract}

\author[P. Germain and S. L. Rydin Myerson]{Pierre Germain and Simon L. Rydin Myerson}

\maketitle


\section{Introduction}

\subsection{Spectral projectors on general manifolds and tori} Given a Riemannian manifold with Laplace-Beltrami operator $\Delta$, consider the spectral projector $P_{\lambda,\delta}$ on (perhaps generalized) eigenfunctions with eigenvalues within $O(\delta)$ of $\lambda$. It is defined through functional calculus by the formula
$$
P_{\lambda,\delta} = P_{\lambda,\delta}^{\chi} = \chi \left( \frac{\sqrt{-\Delta} - \lambda}{\delta} \right),
$$
where $\chi$ is a cutoff function, which is irrelevant for our purposes.

An interesting question is to determine the operator norm from $L^2$ to $L^p$, with $p>2$, of this operator. A theorem of Sogge~\cite{Sogge} gives an optimal answer for any Riemannian manifold if $\delta =1$
$$
\| P_{\lambda,1} \|_{L^2 \to L^p} \lesssim \lambda^{\frac{d-1}{2} - \frac{d}{p}} + \lambda^{\frac{d-1}{2} \left( \frac{1}{2} - \frac{1}{p} \right)}.
$$
While this completely answers the question if $\delta>1$, the case $\delta<1$ is still widely open. Understanding the case $\delta<1$ requires a global analysis on the Riemannian manifold, which makes it very delicate.

In the case of the rational torus $\mathbb{R}^d / \mathbb{Z}^d$, $L^p$ bounds on eigenfunctions attracted a lot of attention; this corresponds to the choice $\delta = 1/\lambda$. The best result in this direction is due to Bourgain and Demeter~\cite{BourgainDemeter3}. More recently, the authors of the present paper~\cite{GM} considered the problem for general values of $\lambda$ and $\delta$, conjectured the bound for general tori
$$
\| P_{\lambda,\delta} \|_{L^{2} \to L^p} \lesssim  \lambda^{\frac{d-1}{2} - \frac{d}{p}} \delta^{1/2} + (\lambda \delta)^{\frac{(d-1)}{2} \left( \frac{1}{2} - \frac{1}{p} \right)}  \qquad \mbox{for $\delta > 1/ \lambda$},
$$
and were able to establish this bound for a range of the parameters $\delta,\lambda,p$.

A full proof of this conjecture seems very challenging in every dimension $d$. Restricting to the case $d=2$, consider the case $ (\mathbb{R} / \mathbb{Z}) \times \mathbb{R} = \mathbb{T} \times \mathbb{R}$ instead of $\mathbb{T}^2$. The conjecture remains identical, but a short proof, relying on $\ell^2$ decoupling, can be provided; this is the main observation of the present paper. Generalizations to higher dimensions are certainly possible.

\subsection{The Euclidean cylinder}

On $\mathbb{T} \times \mathbb{R} = (\mathbb{R} / \mathbb{Z}) \times \mathbb{R}$, we choose coordinates $(x,y)$, with $x \in [0,1]$ and $y \in \mathbb{R}$. The Laplacian operator is  given by
$$
\Delta = \partial_x^2 + \partial_y^2.
$$

A function $f$ on $\mathbb{T} \times \mathbb{R}$ can be expanded through Fourier series in $x$ and Fourier transform in $y$:
$$
f(x) = \sum_{k \in \mathbb{Z}} \int_{\R} \widehat{f}(k,\eta) e^{2\pi i (kx+\eta y)} \,d\eta.
$$
The spectral projector can then be expressed as
$$
P_{\lambda,\delta}  f(x) = \sum_{k \in \mathbb{Z}} \int_{\R} \chi \left( \frac{\sqrt{k^2 + \eta^2} - \lambda}{\delta} \right) \widehat{f}(k,\eta) e^{2\pi i (kx+\eta y)} \,d\eta.
$$

\begin{thm} \label{mainthm} If $\lambda > 1$ and $\delta < 1$,
$$
\| P_{\lambda,\delta} \|_{L^2 \to L^p} \lesssim_\epsilon \lambda^\epsilon \delta^{-\epsilon} \left[ \lambda^{\frac{1}{2} - \frac{2}{p}} \delta^{\frac{1}{2}} + \left( \lambda \delta \right)^{\frac{1}{4} - \frac{1}{2p}} \right]
$$
Furthermore, this estimate is optimal, up to the subpolynomial factor $\lambda^\epsilon \delta^{-\epsilon}$ and the multiplicative constant.
\end{thm}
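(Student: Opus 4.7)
The plan is to combine Bourgain--Demeter $\ell^2$ decoupling for the circle with discrete--continuous Bernstein inequalities, treating separately the ``pole caps'' near $(\pm\lambda,0)$ and the ``equator caps'' of the frequency circle.

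First I partition the $\delta$-neighborhood of $\{\xi^2+\eta^2=\lambda^2\}$ into angular caps of angular width $\sqrt{\delta/\lambda}$, each being a (rotated) $\sqrt{\lambda\delta}\times\delta$ rectangle. Call a cap centered at angle $\theta_0$ a \emph{pole cap} if $|\sin\theta_0|\lesssim(\lambda\delta)^{-1/2}$, so that it contains $O(1)$ integer values of $k$, and an \emph{equator cap} otherwise; write $P_{\lambda,\delta}=P_{\mathrm{pole}}+P_{\mathrm{eq}}$. A standard transfer of $\ell^2$ decoupling from $\mathbb{R}^2$ to the cylinder (via localization in $y$) gives, for $2\le p\le 6$,
\[
\|P_{\lambda,\delta}f\|_{L^p(\mathbb{T}\times\mathbb{R})}\lesssim_\epsilon(\lambda/\delta)^\epsilon\Bigl(\sum_\theta\|f_\theta\|_{L^p}^2\Bigr)^{1/2}.
\]

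The per-cap $L^p$ bound comes from Bernstein in discrete--continuous form: if the Fourier support of $f_\theta$ in $\mathbb{Z}\times\mathbb{R}$ has $\mathbb{Z}\times\mathbb{R}$-measure $\mu_\theta$, then $\|f_\theta\|_{L^p}\lesssim\mu_\theta^{1/2-1/p}\|f_\theta\|_{L^2}$. A geometric computation shows $\mu_\theta\sim\sqrt{\lambda\delta}$ for every pole cap and $\mu_\theta\sim\lambda^{1/2}\delta^{3/2}$ for every equator cap: indeed the number $\sqrt{\lambda\delta}|\sin\theta_0|$ of integer $k$-slices in an equator cap cancels the per-slice $\eta$-length $\delta/|\sin\theta_0|$. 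Combined with orthogonality $\sum_\theta\|f_\theta\|_{L^2}^2\le\|f\|_{L^2}^2$, decoupling yields, up to the $(\lambda/\delta)^\epsilon$ loss,
\[
\|P_{\mathrm{eq}}f\|_{L^p}\lesssim(\lambda^{1/2}\delta^{3/2})^{\frac12-\frac1p}\|f\|_{L^2},\qquad\|P_{\mathrm{pole}}f\|_{L^p}\lesssim(\lambda\delta)^{\frac14-\frac1{2p}}\|f\|_{L^2}\qquad(2\le p\le 6).
\]

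For $p>6$ (past the range of $\ell^2$ decoupling for the circle) I interpolate these $p=6$ bounds against trivial $L^2\to L^\infty$ bounds via Riesz--Thorin. Cauchy--Schwarz gives $\|P_\bullet f\|_{L^\infty}\le\operatorname{vol}(\operatorname{supp})^{1/2}\|f\|_{L^2}$; the Fourier support of $P_{\mathrm{eq}}$ has $\mathbb{Z}\times\mathbb{R}$-measure $\sim\lambda\delta$ while that of $P_{\mathrm{pole}}$ is only $\sim\sqrt{\lambda\delta}$, the latter by summing $\delta\lambda/\sqrt{\lambda^2-k^2}$ over integer $k$ with $\lambda-|k|\lesssim 1/\delta$. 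Interpolation between $p=6$ and $p=\infty$ then produces exactly the two target terms: $\lambda^{1/2-2/p}\delta^{1/2}$ from $P_{\mathrm{eq}}$ and $(\lambda\delta)^{1/4-1/(2p)}$ from $P_{\mathrm{pole}}$. Summing handles all $p>2$. Optimality is witnessed by two Knapp extremizers: (i) a function with Fourier mass on a single integer $k$ near $\pm\lambda$ spread over $\eta\in[0,\sqrt{\lambda\delta}]$, saturating the $(\lambda\delta)^{1/4-1/(2p)}$ term; and (ii) a bump Fourier-localized to a $\sqrt{\lambda\delta}\times\delta$ box near $(0,\lambda)$, fitting inside $\mathbb{T}\times\mathbb{R}$ when $\lambda\delta>1$ and saturating $\lambda^{1/2-2/p}\delta^{1/2}$.

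The main technical difficulty is the clean transfer of $\mathbb{R}^2$ $\ell^2$ decoupling to the cylinder, together with the bookkeeping of the $|\sin\theta_0|$-cancellation producing a $\theta_0$-independent equator-cap measure; once these are in place, everything reduces to Bernstein plus Riesz--Thorin interpolation.
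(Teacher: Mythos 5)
Your upper-bound argument is essentially the paper's proof in different clothing: the pole/equator dichotomy at $|\sin\theta_0|\sim(\lambda\delta)^{-1/2}$ is exactly the paper's split at $|k-\lambda|\sim\delta^{-1}$; the discrete--continuous Bernstein inequality with cap measures $\sqrt{\lambda\delta}$ and $\lambda^{1/2}\delta^{3/2}$ reproduces, in unrescaled coordinates, the paper's Lemma~\ref{Fouriersupport}; and the transfer of decoupling to the cylinder is the paper's $\phi(\delta x)$ trick --- though note the unfolding happens in $x$ (over $\sim\delta^{-1}$ periods, with a Schwartz weight whose Fourier transform lives at scale $\delta$, so the smearing stays inside the thickened annulus), not in $y$, which is already unbounded. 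The one genuine divergence is at $p>6$: you stop decoupling at the critical exponent and interpolate against trivial $L^2\to L^\infty$ bounds, while the paper runs $\ell^2$ decoupling with its supercritical constant $(\delta/\lambda)^{-\frac14+\frac{3}{2p}-\epsilon}$ for all $p$; both yield $\lambda^{\frac12-\frac2p}\delta^{\frac12}$, and your route makes it transparent where that exponent comes from. (Decoupling the pole caps is superfluous: their total support is already $O(\sqrt{\lambda\delta})$, so a single Cauchy--Schwarz plus interpolation handles all $p$ at once, as the paper does.)

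The gap is in the optimality claim. Your second extremizer, a single Knapp cap of dimensions $\sqrt{\lambda\delta}\times\delta$ near $(0,\lambda)$, has $\mathbb{Z}\times\mathbb{R}$ Fourier support of measure $\sim\lambda^{1/2}\delta^{3/2}$ and yields $\|h\|_{L^p}/\|h\|_{L^2}\sim(\lambda^{1/2}\delta^{3/2})^{\frac12-\frac1p}$ (a Dirichlet kernel of length $\sqrt{\lambda\delta}$ in $x$ times a width-$\delta^{-1}$ wave packet in $y$). This equals $\lambda^{\frac12-\frac2p}\delta^{\frac12}$ only at $p=6$; for $p>6$, which is precisely the regime where the term $\lambda^{\frac12-\frac2p}\delta^{\frac12}$ can dominate, the single cap falls short by a factor $(\delta/\lambda)^{\frac14-\frac{3}{2p}}$, so it does not witness optimality of that term. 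The correct extremizer is the one your own $L^\infty$ computation already points to: take $\widehat{h}$ to be the indicator of (a quarter of) the whole annulus, with support of measure $\sim\lambda\delta$, so that $\|h\|_{L^\infty}\sim\lambda\delta$ and $\|h\|_{L^2}\sim(\lambda\delta)^{1/2}$; then the Bernstein inequality $\|h\|_{L^\infty}\lesssim\lambda^{2/p}\|h\|_{L^p}$ (valid since $\widehat{h}$ is supported in a ball of radius $\sim\lambda$) gives $\|h\|_{L^p}/\|h\|_{L^2}\gtrsim\lambda^{-2/p}(\lambda\delta)^{1/2}=\lambda^{\frac12-\frac2p}\delta^{\frac12}$ for every $p$.
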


\subsection{Strichartz estimates} It is interesting to draw a parallel with Strichartz estimates in dimension 2 for the Schr\"odinger equation, in which case the critical exponent equals 4. It was proved in the foundational paper of Bourgain~\cite{JBourgain} that
$$
\| e^{it\Delta} f \|_{L^4([0,1] \times \mathbb{T}^2)} \lesssim_s \| f \|_{H^s( \mathbb{T}^2)} \qquad \mbox{for $s>0$}.
$$
Takaoka and Tzvetkov~\cite{TT} proved that the above inequality fails for $s=0$, but that, on $\mathbb{T} \times \mathbb{R}$,
$$
\| e^{it\Delta} f \|_{L^4([0,1] \times \mathbb{T} \times \mathbb{R})} \lesssim \| f \|_{L^2( \mathbb{T}\times \mathbb{R})}.
$$
Finally, Barron, Christ and Pausader~\cite{BCP} determined the correct global (in time) estimate, for which a further summation index is needed. These examples suggest that optimal estimates might differ by subpolynomial factors between $\mathbb{T}^2$ and $\mathbb{T} \times \mathbb{R}$.

\subsection*{Acknowledgements} PG was supported by the Simons collaborative grant on weak turbulence. SLRM was supported by a Leverhulme Early Career Fellowship.

\section{Proof of the main theorem}
\begin{proof}
By Plancherel's theorem, it suffices to prove
\begin{equation}
\label{estimatef}
\| f \|_{L^p} \lesssim_\epsilon \lambda^\epsilon \delta^{-\epsilon} \left[ \lambda^{\frac{1}{2} - \frac{2}{p}} \delta^{\frac{1}{2}} + \left( \lambda \delta \right)^{\frac{1}{4} - \frac{1}{2p}} \right] \| f \|_{L^2}
\end{equation}
for $f$ a function whose Fourier transform is localized in the corona $\mathcal{C}_{\lambda,\delta}$ of radius $\lambda$ and with {thickness} $\delta/\lambda$:
$$
\mathcal{C}_{\lambda,\delta} = \{ (k,\eta) \; \mbox{such that} \: \lambda - \delta < \sqrt{k^2 + \eta^2} < \lambda + \delta
{\}}
.
$$
By symmetry, one can furthermore assume that $\widehat{f}(k,\eta)$ is localized in the first quadrant $k,\eta \geq 0$.

The function $f$ can be split into two pieces, which will correspond to the two terms on the right-hand side of~\eqref{estimatef}. 
$$
f(x) = \left[ \sum_{|k-\lambda| \leq  \frac{1}{\delta}} +  \sum_{|k-\lambda| > \frac{1}{\delta}} \right] \int_{\R}  \widehat{f}(k,\eta) e^{2\pi i (kx+\eta y)} \,d\eta = f_1(x) + f_2(x).
$$

\bigskip

\noindent \underline{The case $|k-\lambda| \leq \frac{1}{\delta}$} The Fourier support of $f_1$ is made up of a collection of segments. We will see in Lemma~\ref{Fouriersupport} below that the added length of these segments can be bounded by
$$
| \operatorname{Supp} \widehat{f_1} | \lesssim \sqrt{\lambda \delta}.
$$
Therefore, by the Cauchy-Schwartz inequality,
$$
\| f_1 \|_{L^\infty}
{\leq 
| \operatorname{Supp} \widehat{f_1} |^{1/2} \cdot
\| \widehat{f_1} \|_{L^2}^{1/2}}
\lesssim (\lambda \delta)^{1/4} \| f_1 \|_{L^2}.
$$
Interpolating with $L^2$, this gives
$$
\| f_1 \|_{{L^p}} \lesssim (\lambda \delta)^{\frac{1}{4} - \frac{1}{2p}} \| f \|_{L^2}.
$$

\bigskip

\noindent \underline{The case $|k-\lambda| > \frac{1}{\delta}$} We start by choosing a function $\phi \in \mathcal{S}$ which is $>1/2$ on $[-1,1]$, and has Fourier support in $[-1,1]$. 
We use periodicity in the $x$ variable to expand the range of \(x\) from \(x\in [0,1]\) to \(x< \delta^{-1}\), so that
$$
\left\| f_2 \right\|_{L^p(\mathbb{T} \times \mathbb{R})}
\lesssim \delta^{1/p} \left\| \phi \left( \delta x \right) \sum_{|k-\lambda| > \frac{1}{\delta}}  \int_{\R}  \widehat{f}(k,\eta) e^{2\pi i (kx+\eta y)} \,d\eta \right\|_{L^p (\mathbb{R}^2)}.
$$
We now change variables as follows: $X = \lambda x$, $Y = \lambda y$, $K = k / \lambda$, $H = \eta / \lambda$, 
$$
f_2(x,y) = \lambda F(X,Y), \qquad F(X,Y) = \phi \left( \frac{\delta X}{\lambda} \right) \sum_{\substack{K \in \mathbb{Z}/\lambda \\ |K - 1| > \frac{1}{\delta \lambda}}} \int_{\R}  \widehat{f}(\lambda K, \lambda H) e^{2\pi i (KX + HY)} \,dH 
$$
to obtain
$$
\left\| f_2 \right\|_{L^p(\mathbb{T} \times \mathbb{R})} \lesssim  \delta^{1/p}  \lambda^{1-\frac{2}{p}} \left\| F \right\|_{L^p (\mathbb{R}^2)}.
$$
The effect of this change of variables is that the function of $(X,Y)$ whose $L^p$ norm we want to evaluate has Fourier transform supported in the corona  $\mathcal{C}_{1,3\delta/\lambda}$ of radius $1$ and width $\delta/\lambda$, and also in the first quadrant $X,Y \geq 0$. This enables us to apply the $\ell^2$ decoupling theorem of Bourgain and Demeter~\cite{BourgainDemeter3}: for a smooth partition of unity $(\chi_\theta)$ corresponding to a suitable almost disjoint covering of $\mathcal{C}_{1,3\delta/\lambda}$ by caps $(\theta)$ of size $\sim \frac{\delta}{\lambda} \times \sqrt{\frac{\delta}{\lambda}}$,
\begin{align*}
& \left\| f_2 \right\|_{L^p(\mathbb{T} \times \mathbb{R})}  \lesssim
\delta^{\frac{1}{p}}  \lambda^{1-\frac{2}{p}} (\delta/ \lambda)^{-\frac{1}{4}+\frac{3}{2p}-\epsilon}
\left( \sum_{\theta} \left\| \chi_\theta(D) F \right\|_{L^p (\mathbb{R}^2)}^2 \right)^{1/2},
\end{align*}
where $\chi_\theta(D)$ is the Fourier multiplier with symbol $\chi_\theta$.

We now apply the inequality $\| g \|_{L^p(\mathbb{R}^d)} \lesssim \| g \|_{L^2} | \operatorname{Supp} \widehat{g} |^{\frac{1}{2} - \frac{1}{p}}$ (if $p \geq 2$), which follows by applying in turn  the Hausdorff-Young and H\"older inequalities, and then the Plancherel theorem.
Since by Lemma~\ref{Fouriersupport} below
$$
| \operatorname{Supp} \widehat{\chi_\theta(D) F} |
= | \operatorname{Supp} \chi_\theta\widehat{F} |
\lesssim \delta^{5/2} \lambda^{-3/2},
$$
it follows that
$$
 \left\| f_2 \right\|_{L^p(\mathbb{T} \times \mathbb{R})}  \lesssim
\delta^{\frac{1}{p}}  \lambda^{1-\frac{2}{p}} (\delta/ \lambda)^{-\frac{1}{4}+\frac{3}{2p}-\epsilon} ( \delta^{5/2} \lambda^{-3/2})^{\frac{1}{2} - \frac{1}{p}} \left( \sum_{\theta} \left\| \chi_\theta(D) F \right\|_{L^2 (\mathbb{R}^2)}^2 \right)^{1/2}.
$$
By almost orthogonality, this becomes
$$
 \left\| f_2 \right\|_{L^p(\mathbb{T} \times \mathbb{R})}  \lesssim
\delta^{\frac{1}{p}}  \lambda^{1-\frac{2}{p}} (\delta/ \lambda)^{-\frac{1}{4}+\frac{3}{2p}-\epsilon} ( \delta^{5/2} \lambda^{-3/2})^{\frac{1}{2} - \frac{1}{p}} \left\| F \right\|_{L^2(\mathbb{R}^2)}.
$$
Finally, undoing the change of variables and using once again periodicity in the $x$ variable gives
\begin{align*}
\left\| f_2 \right\|_{L^p(\mathbb{T} \times \mathbb{R})} & \lesssim
\delta^{\frac{1}{p}}  \lambda^{1-\frac{2}{p}} (\delta/ \lambda)^{-\frac{1}{4}+\frac{3}{2p}-\epsilon} ( \delta^{5/2} \lambda^{-3/2})^{\frac{1}{2} - \frac{1}{p}} \delta^{-1/2} \| f \|_{L^2} \\
& \lesssim \lambda^{\frac{1}{2} - \frac{2}{p}} \delta^{1/2}
\end{align*}

\bigskip

\noindent \underline{Optimality.} The optimality of the statement of the theorem is proved through two examples. The first one is an analog of the Knapp example: assume $\lambda \in \mathbb{N}$, and consider the function $g$ given by its Fourier transform
$$
\widehat{g}(k,\eta) = \mathbf{1}_{\lambda}(k) \chi \left( \frac{\eta}{\sqrt{\lambda \delta}} \right).
$$
Here, $\mathbf{1}_{\lambda}$ is the indicator function of $\{ \lambda \}$ and $\chi$ is a cutoff function with a sufficiently small support, so that $\operatorname{Supp} \widehat{g} \subset \mathcal{C}_{\lambda,\delta}$. In physical space,
$$
g(x,y) = \sqrt{\lambda \delta} e^{2\pi i\lambda x} \widehat{\chi}( \sqrt{\lambda \delta} y).
$$
It has $L^p$ norm $\sim (\lambda \delta)^{\frac{1}{2} - \frac{1}{2p}}$, so that
$$
\frac{ \| g \|_{L^p}}{\| g \|_{L^2}} \sim (\lambda \delta)^{\frac{1}{4} - \frac{1}{2p}}.
$$
We now consider the function $h$ given by its Fourier transform
$$
\widehat{h}(k,\eta) = \mathbf{1}_{\mathcal{C}_{\lambda,\delta}}(k,\eta) \mathbf{1}_{[0,\lambda/2]}(k);
$$
here, $\mathbf{1}_{\mathcal{C}_{\lambda,\delta}}$ is the indicator function of the annulus, and $\mathbf{1}_{[0,\lambda/2]}$ the indicator function of the interval. It is easy to check that
$|\operatorname{Supp} \widehat{h}| \sim \lambda \delta$, so that $\| h \|_{L^\infty} \sim \lambda \delta$ and $\| h \|_{L^2} \sim \sqrt{\lambda \delta}$, and finally
$$
\frac{ \| h \|_{L^\infty}}{\| h \|_{L^2}} \sim \sqrt{\lambda \delta}.
$$
By the Bernstein inequality,
$$
\frac{ \| h \|_{L^p}}{\| h \|_{L^2}} \gtrsim \lambda^{-2/p} \frac{ \| h \|_{L^\infty}}{\| h \|_{L^2}} \sim \lambda^{\frac{1}{2} - \frac{2}{p}} \delta^{1/2}.
$$
The examples $g$ and $h$ show that the statement of the theorem is optimal, up to subpolynomial losses.
\end{proof}

\section{Bounds for the Fourier support}

\begin{lem}[Bound on the size of Fourier support] \label{Fouriersupport}
\label{Fourier support}
With the notations of the proof of Theorem~\ref{mainthm},
\begin{itemize}
\item[(i)] The function $f_1$ is a function on $\mathbb{T} \times \mathbb{R}$. As such, its Fourier transform is supported on a union of lines, and has one-dimensional measure
$$
\displaystyle |  \operatorname{Supp} \widehat{f_1} | \lesssim \sqrt{\lambda \delta}
$$.
\item[(ii)] The function $\chi_\theta(D) F$ is a function on $\mathbb{R}^2$. Its Fourier transform is defined on $\mathbb{R}^2$, and has two-dimensional measure
$$
\displaystyle |  \operatorname{Supp} {\chi_\theta} \widehat{F} | \lesssim \delta^{5/2} \lambda^{-3/2}.
$$
\end{itemize}
\end{lem}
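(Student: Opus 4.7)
The plan is to treat parts (i) and (ii) separately: (i) reduces to a direct computation of segment lengths on a thin annulus, while (ii) combines a geometric count of vertical Fourier strips hitting the cap $\theta$ with an area bound per strip.

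For (i), since $f_1$ lives on $\mathbb{T}\times\mathbb{R}$, its Fourier support sits inside $\mathbb{Z}\times\mathbb{R}$ and splits as a disjoint union of vertical segments $\{k\}\times I_k$ for integer $k$ with $|k-\lambda|\leq 1/\delta$, where $I_k=\{\eta\geq 0:\lambda-\delta<\sqrt{k^2+\eta^2}<\lambda+\delta\}$. Writing $k=\lambda-j$, a direct computation using $(\lambda+\delta)^2-(\lambda-\delta)^2=4\lambda\delta$ yields $|I_k|\lesssim\delta\sqrt{\lambda/j}$ when $\delta\leq j\leq 1/\delta$, and $|I_k|\lesssim\sqrt{\lambda\delta}$ when $|j|\leq\delta$ (only $O(1)$ such integers $k$ exist since $\delta<1$). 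The sum $\sum_{j=1}^{\lceil 1/\delta\rceil}\delta\sqrt{\lambda/j}$ is comparable to $\delta\sqrt{\lambda}\cdot\sqrt{1/\delta}=\sqrt{\lambda\delta}$ by an integral estimate, and integer $k>\lambda+\delta$ contribute nothing since $I_k=\emptyset$ there.

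For (ii), the first task is to locate $\widehat F$. Writing $F(X,Y)=\phi(\delta X/\lambda)\,G(X,Y)$ with $G(X,Y)=\lambda^{-1}f_2(X/\lambda,Y/\lambda)$ which is $\lambda$-periodic in $X$, the Fourier transform $\widehat G$ is concentrated on the vertical lines $\{\xi=k/\lambda\}$ for integer $k$ with $|k-\lambda|>1/\delta$; multiplication by $\phi(\delta X/\lambda)$ convolves in Fourier by the Fourier transform of $\phi(\delta\cdot/\lambda)$, which is supported in $|\xi|\leq\delta/\lambda$. Hence $\chi_\theta\widehat F$ sits in the intersection of vertical strips of width $\sim\delta/\lambda$ centered on the lattice lines $\xi=k/\lambda$ with the cap $\theta$. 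Let $\theta_0\in(0,\pi/2)$ be the angular center of the cap; in adapted coordinates $(s,u)$ (radial $s$ with $|s|<\delta/\lambda$, tangential $u$ with $|u|<\sqrt{\delta/\lambda}$), a strip becomes $|s\cos\theta_0-u\sin\theta_0-c|<\delta/\lambda$. For each fixed $s$ this cuts $u$ to an interval of length $(\delta/\lambda)/\sin\theta_0$, so the strip-cap intersection has area $\lesssim(\delta/\lambda)^2/\sin\theta_0$. The number of strips meeting the cap is the $\xi$-extent of the cap, $\sim\sqrt{\delta/\lambda}\sin\theta_0$, divided by the lattice spacing $1/\lambda$, giving $\sqrt{\lambda\delta}\sin\theta_0$. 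Multiplying yields $|\operatorname{Supp}\chi_\theta\widehat F|\lesssim\sqrt{\lambda\delta}\cdot(\delta/\lambda)^2=\delta^{5/2}\lambda^{-3/2}$.

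The main subtlety is that the per-strip bound $(\delta/\lambda)^2/\sin\theta_0$ only improves on the trivial full-cap area bound $(\delta/\lambda)^{3/2}$ when $\sin\theta_0\gtrsim\sqrt{\delta/\lambda}$. This is exactly where the exclusion $|k-\lambda|>1/\delta$ built into $f_2$ enters: any cap supporting $\chi_\theta\widehat F$ must have $|\cos\theta_0-1|\gtrsim 1/(\delta\lambda)$, hence $\sin\theta_0\gtrsim 1/\sqrt{\delta\lambda}$, which dominates $\sqrt{\delta/\lambda}$ precisely because $\delta<1$. Caps near $\theta_0=\pi/2$ present no issue since there $\sin\theta_0\sim 1$.
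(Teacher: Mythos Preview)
Your proof is correct and follows essentially the same strategy as the paper: for (i), direct summation of the segment lengths $|I_k|\sim\delta\sqrt{\lambda/j}$ and an integral comparison; for (ii), counting how many vertical strips of width $\sim\delta/\lambda$ meet the cap and multiplying by the area contributed per strip. The only cosmetic difference is in the bookkeeping for (ii): the paper parametrizes by a dyadic scale $2^j\sim|\lambda-k|$ and records the per-strip area as $(\delta/\lambda)\cdot|D_k^\lambda|\sim\delta^2\lambda^{-3/2}2^{-j/2}$, whereas you parametrize by the cap angle $\theta_0$ and obtain $(\delta/\lambda)^2/\sin\theta_0$; since $\sin\theta_0\sim\sqrt{2^j/\lambda}$, these coincide, and your closing remark that the exclusion $|k-\lambda|>1/\delta$ forces $\sin\theta_0\gtrsim(\lambda\delta)^{-1/2}\geq\sqrt{\delta/\lambda}$ is exactly the paper's condition $2^j>1/\delta$ in disguise.
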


\begin{proof} $(i)$ Consider $f$ as in the proof of Theorem~\ref{mainthm}, namely with Fourier support in $\mathcal{C}_{\lambda,\delta}$. Since $(k,\eta)$ range in $\Z \times \R$ with \(k,\eta \geq 0\), the Fourier support of $f$ is contained in
$
\cup_{k \in \mathbb{Z}} \{ k \} \times E^\lambda_k,
$
where
\begin{equation*}
E_k^\la =
\begin{cases}
\emptyset
&
(k\geq k_+),
\\
\left(0,
\sqrt{k_+^2-k^2}
\right)
&(|k- \lambda| <\delta),
\\
\left(
\sqrt{(\lambda-\delta)^2-k^2},
\sqrt{k_+^2-k^2}
\right)
&(
0\leq k \leq \lambda-\delta).
\end{cases}
\end{equation*}
Recalling that \(\hat{f}_1\) is just \(\hat{f}\) restricted to \(|k-\lambda|\leq \frac{1}{\delta}\) , one can then add up these pieces to get the bound
\begin{align*}
| \operatorname{Supp}\widehat{ f_1 }|
&\lesssim  \sum_{\max\{0,\lambda-\frac{1}{\delta}\}\leq k< k_+} |E_k| \lesssim \sqrt{\lambda \delta} + \sum_{\substack{k \geq 0 \\ \frac{1}{\delta}\geq \lambda-k>\delta}}
\frac{{ \delta\lambda} }{\sqrt{\lambda (\lambda-x) }} 
 \intertext{and as \(y\mapsto 1/\sqrt{y}\) is decreasing this is}
 &\leq 2\sqrt{\lambda \delta} 
+ \int_\delta^{\min\{\lambda,\frac{1}{\delta}\}}
\frac{{ \delta\lambda} }{\sqrt{\lambda y}} \,dy
\leq
4\sqrt{\lambda\delta}.
\end{align*}
\bigskip

\noindent $(ii)$ Turning to $F$, it has Fourier support in
\begin{equation}
\label{ecureuil}
\bigcup_{\substack{k \in \mathbb{Z}\\ |k-\lambda|>\frac{1}{\delta} }}
\left[ \frac{k}{\lambda} - \frac{2 \delta}{\lambda},  \frac{k}{\lambda} + \frac{2\delta}{\lambda} \right] \times D^\lambda_k, \qquad D_k^\la = \left\{ H, \; 1 - \frac{\de}{\la} < \sqrt{\frac{k^2}{\lambda^2} + H^2} < 1 + \frac{\de}{\la} \right\}.
\end{equation}
Consider $\chi_\theta(D) F$, for a cap $\theta$ with dimensions  $\sim \frac{\delta}{\lambda} \times \sqrt{\frac{\delta}{\lambda}}$ adapted to the corona $\mathcal{C}_{1,3\delta/\lambda}$. Given such a cap, there is \(j\in\N\) with $2^j > \frac{1}{\delta}$ such that every point in the intersection of \(\theta\) with the set~\eqref{ecureuil} satisfies $|\lambda - k|\sim 2^j$. 

There are around $\sqrt{\delta} 2^{j/2}$ such values of \(k\) for which the vertical strip \(\left[ \frac{k}{\lambda} - \frac{2 \delta}{\lambda},  \frac{k}{\lambda} + \frac{2\delta}{\lambda} \right] \) intersects the cap $\theta$. For each such \(k\), the size of $D_k^\lambda$ is $\sim \delta \lambda^{-1/2} 2^{-j/2}$. Hence, adding up the contributions in \eqref{ecureuil},
\[
| \operatorname{Supp} \chi_\theta (D) F | \lesssim
\sqrt{\delta} 2^{j/2} \cdot \delta \lambda^{-1} \cdot \delta \lambda^{-1/2} 2^{-j/2} = \delta^{5/2} \lambda^{-3/2}.\qedhere
\]
\end{proof} 

\bibliographystyle{abbrv}
\bibliography{references}

\end{document}